\theoremstyle{plain}
\newtheorem{conjecture}{Conjecture}
\newtheorem{lemma}{Lemma}
\newtheorem{theorem}{Theorem}
\theoremstyle{definition}
\newtheorem{question}{Question}
\newtheorem{remark}{Remark}
\DeclareMathOperator{\ex}{ex}
\let\geq\geqslant
\let\leq\leqslant
\let\ge\geqslant
\title{A lower bound on the zero forcing number}
\author{Randy Davila$^{1}$}
\address{$^1$Department of Mathematics and Statistics, University of Houston--Downtown, Houston, TX 77002, USA}
\author{Thomas Kalinowski$^{2,3}$ and Sudeep Stephen$^{3,4}$}
\address{$^2$School of Science and Technology, University of New England, Armidale, 2351 NSW, Australia}
\address{$^3$School of Mathematical and Physical Sciences, University of Newcastle, 2308 NSW, Australia}
\address{$^4$School of Mathematical Sciences, National Institute of Science Education and Research, Bhubaneswar, India}
\email{davilar@uhd.edu,tkalinow@une.edu.au,sudeep.stephen@niser.ac.in}
\date{\today}
\begin{document}

\begin{abstract}
  In this note, we study a dynamic vertex coloring for a graph $G$. In particular, one starts with a
  certain set of vertices black, and all other vertices white. Then, at each time step, a black
  vertex with exactly one white neighbor forces its white neighbor to become black. The initial set
  of black vertices is called a \emph{zero forcing set} if by iterating this process, all of the
  vertices in $G$ become black. The \emph{zero forcing number} of $G$ is the minimum cardinality of
  a zero forcing set in $G$, and is denoted by $Z(G)$. Davila and Kenter have conjectured in 2015
  that $Z(G)\geq (g-3)(\delta-2)+\delta$ where $g$ and $\delta$ denote the girth and the minimum
  degree of $G$, respectively. This conjecture has been proven for graphs with girth $g \leq 10$. In
  this note, we present a proof for $g \geq 5$, $\delta \geq 2$, thereby settling the conjecture.
\end{abstract}

\maketitle

\section{Introduction}\label{sec:intro}
For a two-coloring of the vertex set of a simple graph $G=(V,E)$ consider the following color-change
rule: a white vertex $u$ is converted to black if it is the only white neighbor of some black vertex
$v$. We call such a black vertex $v$ a \emph{forcing vertex} and say $v$ forces $u$.  Given a
two-coloring of $G$, the \emph{derived set} is the set of black vertices obtained by applying the
color-change rule until no more changes are possible. A \emph{zero forcing set} for $G$ is a subset
of vertices $S\subseteq V$ such that if initially the vertices in $S$ are colored black and the
remaining vertices are colored white, then the derived set is the complete vertex set $V$.  The
minimum cardinality of a zero forcing set for the graph $G$ is called the \emph{zero forcing number}
of $G$, denoted by $Z(G)$. This concept was introduced by the AIM Minimum Rank – Special Graphs Work
Group~\cite{AIM-2008-Zeroforcingsets} as a tool to bound the minimum rank of matrices associated
with the graph $G$. Since its introduction the zero-forcing number has been studied as an
interesting graph invariant with various
applications~\cite{BarioliBarrettFallatEtAl2010,BarioliBarrettFallatHallHogbenShaderDriesscheHolst-2012-ParametersRelatedto,EdholmHogbenHuynhEtAl2012,Genter1,Gentner2018,Lu.etal_2015_Proofconjecturezero}.
Moreover, it has been established that the zero forcing problem is $NP$-complete~\cite{A08}, which
motivates the search for easily computable bounds for $Z(G)$. The following conjecture was made
by Davila and Kenter~\cite{DavilaKenter}.
\begin{conjecture}\label{girthConj}
  If $G$ is a graph with girth $g\ge 3$ and minimum degree $\delta \geq 2$, then $Z(G) \ge \delta + (\delta - 2)(g - 3)$.    
\end{conjecture}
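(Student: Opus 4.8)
The plan is to study a minimum zero forcing set $S$ through its \emph{forcing chains}. Fixing a chronological record of the forces that turn $S$ into $V(G)$ and tracing, for each forced vertex, the vertex that forced it, one decomposes $V(G)$ into $|S|=Z(G)$ vertex-disjoint paths $C_1,\dots,C_{Z(G)}$, each starting at a distinct vertex of $S$. The first fact I would record is that \emph{every forcing chain is an induced path}: if $v_iv_j$ were a chord of a chain $v_0v_1\cdots v_m$ with $j\ge i+2$, then at the instant $v_i$ forces $v_{i+1}$ the vertex $v_j$ is already black, so $v_j$ is forced strictly before $v_{i+1}$, contradicting the order along the chain. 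This induced-path property is exactly what makes the girth interact with the forcing dynamics, since it pins down which neighbors of a chain vertex lie off the chain.

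Next I would single out the chain $P$ through the very first force $v_0\to v_1$. Before that force the only black vertices are those of $S$, so $N[v_0]\setminus\{v_1\}\subseteq S$; this already produces $\delta$ initial black vertices (equivalently, $\delta$ chain origins) and recovers the classical bound $Z(G)\ge\delta$, i.e.\ the case $g=3$. I would then walk along the early vertices $v_1,\dots,v_{g-3}$ of $P$ (arranging that the chain is long enough; see below) and use the girth to spread out their neighborhoods. Because $P$ is induced, each internal $v_i$ has at least $\delta-2$ neighbors off $P$, and for $1\le i\le g-3$ these off-path neighborhoods are pairwise disjoint and disjoint from $N[v_0]$: a vertex $y\notin V(P)$ adjacent to both $v_i$ and $v_j$ would close the cycle $y\,v_i v_{i+1}\cdots v_j\,y$ of length $(j-i)+2\le g-1$, and a vertex adjacent to both $v_i$ and $v_0$ would close a cycle of length $i+2\le g-1$, each contradicting girth $g$. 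Hence $N[v_0]\setminus\{v_1\}$ together with $\bigcup_{i=1}^{g-3}\bigl(N(v_i)\setminus V(P)\bigr)$ consists of at least $\delta+(\delta-2)(g-3)$ \emph{distinct} vertices.

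The heart of the matter is to upgrade this count of distinct \emph{black} vertices into a count of distinct \emph{chains} (equivalently, of distinct vertices of $S$), because a priori an off-path neighbor of some $v_i$ need not itself belong to $S$. Here I would exploit the forcing order: at the moment $v_i$ forces $v_{i+1}$, each off-path neighbor of $v_i$ is already black, so it either lies in $S$ or was blackened by another chain that has reached into the neighborhood of $v_i$. The plan is to show, again via girth, that no single chain can fill two of these neighborhood slots along $P$: if one induced path contained off-path neighbors of two distinct $v_i$'s, the short return through $P$ would once more create a cycle shorter than $g$. Carrying this through should attach to each of the $(\delta-2)(g-3)$ off-path neighbors a chain distinct from those counted near $v_0$ and from one another, giving $Z(G)\ge\delta+(\delta-2)(g-3)$.

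I expect this last step to be the main obstacle. The clean part---the induced-path property, the first-force bound, and the girth-driven disjointness of the local neighborhoods---is essentially forced; the delicate part is the bookkeeping that turns ``blackened in time by \emph{some} chain'' into ``blackened by a \emph{distinct} chain.'' The subtlety is twofold. First, girth $g$ only guarantees that two neighbors of a common vertex lie at distance at least $g-2$ apart \emph{relative to the path back through $P$}, which does not by itself forbid one long induced chain from servicing several slots at different depths $i$; ruling this out requires combining the cycle-length constraints across distinct depths with the temporal order of the forces. Second, the argument presupposes a forcing chain with at least $g-2$ vertices, and since chains may a priori be short I must either guarantee a long chain or replace the single-chain count by a tree-of-chains argument rooted at $v_0$. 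I therefore anticipate that the technical core is a careful extremal argument on the configuration around $v_0$, where girth $\ge 5$ (applied uniformly, rather than the ad hoc analyses known for $g\le 10$) supplies the acyclicity and $\delta\ge 2$ supplies the branching that together produce the required number of distinct origins.
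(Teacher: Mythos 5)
Your setup is sound as far as it goes: forcing chains are indeed induced paths, the first force gives $N(v_0)\setminus\{v_1\}\subseteq S$ and hence $Z(G)\geq\delta$, and girth does make the off-path neighborhoods of $v_1,\dots,v_{g-3}$ pairwise disjoint and disjoint from $N[v_0]$. But the proposal stops exactly where the work begins, and the route you sketch for the missing step does not go through. Your key hoped-for claim --- that no single chain can contain off-path neighbors of two distinct $v_i$'s --- is simply false: if a chain meets the neighborhoods of $v_i$ and $v_j$ at two points that are far apart \emph{along the chain}, the cycle closed through $P$ has length (chain distance) $+\,(j-i)+2$, which the girth hypothesis does not bound, because the chain segment between the two contact points can be arbitrarily long. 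You concede this yourself, but the concession leaves the central counting unproven; likewise your second worry --- that the chain through the first force may have fewer than $g-2$ vertices --- is never resolved, and the ``tree-of-chains argument rooted at $v_0$'' is only hypothesized. What you actually establish is a count of $\delta+(\delta-2)(g-3)$ distinct \emph{black} vertices, and upgrading black vertices to distinct elements of $S$ is not bookkeeping: it is the conjecture.

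It is instructive to see how the paper's proof sidesteps both obstacles by abandoning the single-chain viewpoint. It takes the first $g-2$ \emph{forcing vertices} $x_1,\dots,x_{g-2}$ in chronological order, wherever they lie (their existence follows from a separate count $\lvert V\rvert\geq g(\delta-1)$ obtained from a girth cycle, which disposes of your length worry), and counts for each $x_i$ its initially black neighbors $S_i$ not seen by earlier $x_j$. Every ``lost'' neighbor --- one forced earlier or shared with an earlier forcing vertex --- becomes an edge of an auxiliary graph $H_2$ on $X=\{x_1,\dots,x_{g-2}\}$, so the assumed upper bound on $\lvert S\rvert$ translates into a \emph{lower} bound $\lvert E_2\rvert\geq g-3+\lvert S_X\rvert$. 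The contradiction is then extremal rather than local: $G[X]$ is a forest since $\lvert X\rvert<g$; a star lemma pins down the $E_2$-edges inside each tree component exactly (contributing $\lvert X_p\rvert-2$ each), forcing at least $2k-1$ cross edges between the $k$ components; cycles mixing tree edges and cross edges lift to cycles in $G$ of controlled length; and finally Mantel's theorem together with the Abajo--Di\'anez evaluation of $\ex\left(n;\{C_3,\dots,C_\ell\}\right)$ force a short cycle in the component graph, hence in $G$ --- contradicting the girth. Nothing playing the role of this global extremal machinery appears in your sketch, and it is precisely the step at which the earlier partial results (the $g\leq 10$ cases you would be replacing) required ad hoc analyses. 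So the gap is not a deferred detail: the unproven step is the theorem itself.
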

Genter et al.~\cite{Genter1}, Genter and Rautenbach~\cite{Gentner2018} and~Davila and
Henning~\cite{Davila2017} have shown that the statement is true for $g \leq 10$. In this note we
provide a complete proof for $g \geq 5$.
\begin{theorem}\label{thm:result}
  Let $G$ be a graph with girth $g \geq 5$ and minimum degree $\delta \geq
  2$. Then $Z(G) \ge \delta + (\delta - 2)(g - 3)$.    
\end{theorem}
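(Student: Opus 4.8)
The plan is to work with a minimum zero forcing set $S$ and exploit the standard structural fact that a successful execution of the color-change rule partitions $V$ into exactly $Z(G)=|S|$ vertex-disjoint directed \emph{forcing chains}, each beginning at a distinct vertex of $S$. Writing $r(x)\in S$ for the starting vertex (the \emph{root}) of the chain through $x$, the map $x\mapsto r(x)$ is constant along each chain, so a lower bound on $Z(G)$ is obtained by exhibiting a set of vertices whose roots are pairwise distinct. The girth hypothesis will be used to force this distinctness: a coincidence of two roots means two of our chosen vertices lie on a common chain, and I will arrange matters so that this closes a cycle shorter than $g$, contradicting the girth.

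Concretely, fix the first vertex $v_0\in S$ that performs a force, with forcing chain $v_0\to v_1\to v_2\to\cdots$. Since $v_0$ forces $v_1$ at the very first step, all remaining neighbors of $v_0$ already lie in $S$, so $A_0=\{v_0\}\cup(N(v_0)\setminus\{v_1\})$ consists of at least $\delta$ vertices of $S$, i.e. $\delta$ distinct roots. For $1\le i\le g-3$ set $A_i=N(v_i)\setminus\{v_{i-1},v_{i+1}\}$; every such vertex is black by the time $v_i$ forces $v_{i+1}$, and since $\deg(v_i)\ge\delta$ we get $|A_i|\ge\delta-2$. First I would record two consequences of girth at least $g$: the prefix $v_0,\dots,v_{g-2}$ is an induced path (any chord would close a cycle shorter than $g$), and the sets $A_0,\dots,A_{g-3}$ are pairwise disjoint and disjoint from this path (a vertex lying in two of them sits at distance one from two backbone vertices, again producing a short cycle).

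The decisive step is to show that the roots attached to $A_0\cup A_1\cup\cdots\cup A_{g-3}$ are pairwise distinct; granting this, counting gives
\[
Z(G)\ \ge\ |A_0|+\sum_{i=1}^{g-3}|A_i|\ \ge\ \delta+(g-3)(\delta-2),
\]
which is exactly the claimed bound. To prove distinctness I would suppose that two of these vertices $p,q$ share a chain $P$, so that $P$ contains a $G$-path from $p$ to $q$; combined with the edges joining $p$ and $q$ to the backbone and the backbone segment between their attachment points (of length at most $g-3$), this yields a closed walk from which the aim is to extract a cycle of length strictly less than $g$.

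The hard part is controlling this last construction, since the common-chain segment between $p$ and $q$ could a priori be long, so the closed walk need not be short. I expect to resolve this with a timing argument: along any forcing chain the number of edges between two vertices is at most the difference of the time steps at which they turn black, so restricting attention to vertices forced early — which is precisely why the \emph{first} forcer and the initial part of its chain are selected — keeps the offending segment short enough to guarantee a cycle below length $g$, and a minimal-counterexample formulation (minimizing the length of the would-be short cycle) is the natural way to package this. The remaining subtlety, and also the point where the extremal cage examples bite, is that the chain of $v_0$ may terminate before reaching $v_{g-2}$; handling a prematurely terminating backbone — by continuing through its terminal vertex along a neighbouring chain, or by re-rooting the argument — is where most of the technical care will be required.
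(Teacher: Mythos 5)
Your reduction to counting distinct chain roots is set up correctly (the disjointness of $A_0,A_1,\dots,A_{g-3}$ and the induced-path property of the backbone do follow from girth $\ge g$ exactly as you say), but the step you yourself single out as decisive --- that the roots attached to $A_0\cup\dots\cup A_{g-3}$ are pairwise distinct --- is a genuine gap, and it cannot be closed by the timing argument you propose. Two concrete obstructions. First, membership of $p$ in $A_i$ only constrains $p$ to be black \emph{before $v_i$ performs its force}, and while $v_0$ forces at the first time step, the subsequent forces $v_1\to v_2,\dots$ of your backbone may occur arbitrarily late in the chronological list, with unboundedly many other forces interleaved; hence the times at which two vertices $p,q\in\bigcup_i A_i$ turn black, and with them the length of the chain segment joining them, are not bounded by any function of $g$ and $\delta$. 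Your closed walk therefore carries no useful length bound. Second, and worse, even the shortest possible same-chain coincidence produces no contradiction: take $p\in A_0\cap S$ and suppose $p$ later forces a vertex $q\in A_{g-3}$. Then $r(p)=r(q)=p$, so your count collapses by one, yet the resulting cycle $v_0,p,q,v_{g-3},v_{g-4},\dots,v_1,v_0$ has length exactly $1+1+1+(g-3)=g$, perfectly consistent with the girth hypothesis. So root coincidences can genuinely occur in graphs of girth $g$; the clean bound cannot be obtained by forbidding them one at a time. On top of this, your backbone may terminate before $v_{g-2}$ (the first chain can be short), and ``re-rooting'' is left without content.

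This diagnosis matches why the paper's proof is so much heavier than your sketch. It takes as its ``backbone'' the chronologically first $g-2$ forcing vertices $x_1,\dots,x_{g-2}$ rather than a single chain (their existence is secured by proving $\lvert V\rvert\ge g(\delta-1)$, which disposes of your early-termination problem), records each coincidence as an edge of an auxiliary graph $H_2$, and then shows via inequality~\eqref{eq:E2_lower_bound} that a too-small zero forcing set forces \emph{at least} $g-3+\lvert S_X\rvert$ coincidences --- a single one, as your example above shows, is harmless. Extracting a short cycle from the abundance of coincidences is the real theorem: it requires the forest structure of $G[X]$, the exact count $\lvert E_2(X_p)\rvert=\lvert X_p\rvert-2$ per component, a cycle-lifting lemma (a cycle in $H_3$ with $s$ edges of $E''_2$ lifts to a cycle of length at most $\lambda+s$ in $G$), and finally the extremal numbers $\ex\bigl(n;\{C_3,\dots,C_\ell\}\bigr)$ of Abajo--Di\'anez together with Mantel's theorem to pin $k\in\{5,6\}$ and find a triangle of coincidences that lifts below length $g$. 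Your instinct that ``most of the technical care'' lies in the two spots you flagged is accurate, but those spots are not finishing details to be packaged by a minimal-counterexample trick --- they are where the entire difficulty of the theorem lives, and as proposed your argument does not surmount them.
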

\begin{remark}
  A proof of the slightly weaker bound $Z(G)\geq 2 + (\delta - 2)(g - 3)$ has recently been found by
  F\"urst and Rautenbach~\cite{Furst2017}. For large values of $g$ and $\delta$, a stronger bound
  has been proved by Kalinowski et al.~\cite{Kalinowski2017}.
\end{remark}

For a positive integer $n$, we use $[n]$ to denote the set $\{1,2,\dots,n\}$, and in addition we set
$[0]=\emptyset$. We restrict ourselves to undirected finite simple graphs, and use the following
notation, referring the reader to any graph theory textbook such as~\cite{Diestel2017} for more
details. Let $G = (V,E)$ be a graph. Two vertices $v,w\in V$ are called \emph{neighbors}, or
\emph{adjacent} vertices, whenever $\{v,w\}\in E$. The \emph{neighborhood} of $v\in V$ is the set of
neighbors of~$v$, denoted by $N(v) = N_G(v)$. The \emph{degree} of $v\in V$ is the cardinality of
its neighborhood, and is denoted by $\deg_G(v) = \lvert N(v)\rvert$. The minimum vertex degree in
$G$ is denoted by $\delta(G)$. A cycle of length $\ell$ is denoted by $ C_\ell$. The girth of $G$,
denoted $g = g(G)$, is the length of a shortest cycle in $G$.

Our strategy for proving Theorem~\ref{thm:result} is the same that was used in~\cite{Davila2017}. We
assume that $G$ has a zero forcing set $S$ of size $\lvert S\rvert<\delta + (\delta - 2)(g - 3)$,
use the minimum degree condition to bound the number of edges in the subgraph of $G$ induced by the
first $g-2$ forcing vertices together with the union of their neighborhoods, and finally show that
this implies the existence of a short cycle, contradicting the assumption on the girth of $G$. For
this argument we need bounds on the \emph{extremal number} $\ex(n; \{C_3, C_4, \dots, C_\ell\})$,
which is defined as the maximum number of edges in a simple graph with $n$ vertices and girth at
least $\ell+1$. For $\ell=3$ this number is given by Mantel's theorem~\cite{Mantel1907} from 1907,
which is a special case of Tur\'an's theorem~\cite{Turan1941} (Proofs for this classic result which
started the area of extremal graph theory can be found in most graph theory textbooks, for
instance~\cite[Chapter 7]{Diestel2017}).
\begin{theorem}[Mantel~\cite{Mantel1907}]\label{thm:mantel}
  For every positive integer $n$, $\ex\left(n;\{C_3\}\right)=\lfloor n^2/4\rfloor$.
\end{theorem}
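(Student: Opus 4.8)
The plan is to prove the two inequalities behind the stated equality separately. For the lower bound $\ex(n;\{C_3\})\ge\lfloor n^2/4\rfloor$, I would simply exhibit the complete bipartite graph $K_{\lfloor n/2\rfloor,\lceil n/2\rceil}$: being bipartite it contains no odd cycle and in particular no $C_3$, so it has girth at least $4$, while it carries exactly $\lfloor n/2\rfloor\cdot\lceil n/2\rceil=\lfloor n^2/4\rfloor$ edges. This shows the bound is attained, so the entire content of the theorem lies in the matching upper bound.

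For the upper bound, the key structural observation is that triangle-freeness forces the neighbourhoods of adjacent vertices to be disjoint: if $\{u,v\}\in E$ and some vertex $w$ were adjacent to both, then $uvw$ would be a triangle. Consequently $\deg(u)+\deg(v)=\lvert N(u)\cup N(v)\rvert\le n$ for every edge $\{u,v\}$. I would then convert this local fact into a global count. Writing $\Delta$ for the maximum degree and fixing a vertex $v$ with $\deg(v)=\Delta$, the set $N(v)$ is independent (any edge inside it would complete a triangle with $v$), so every edge of $G$ has at least one endpoint in $V\setminus N(v)$. Therefore
\[
  \lvert E\rvert\le\sum_{u\in V\setminus N(v)}\deg(u)\le(n-\Delta)\Delta,
\]
and since $(n-\Delta)\Delta\le n^2/4$ by the AM--GM inequality and $\lvert E\rvert$ is an integer, we conclude $\lvert E\rvert\le\lfloor n^2/4\rfloor$.

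An alternative route to the same bound, which avoids singling out a maximum-degree vertex, is to sum the edge inequality $\deg(u)+\deg(v)\le n$ over all edges: the left-hand side equals $\sum_{x\in V}\deg(x)^2$, which by Cauchy--Schwarz is at least $\bigl(\sum_{x}\deg(x)\bigr)^2/n=4\lvert E\rvert^2/n$, and comparing this with the upper bound $n\lvert E\rvert$ again yields $\lvert E\rvert\le n^2/4$. I expect the only genuine obstacle to be the first step, namely recognising that the triangle-free hypothesis should enter through the disjointness of neighbourhoods of adjacent vertices; once that inequality is in hand the argument reduces to a routine optimisation, after which integrality of $\lvert E\rvert$ supplies the floor and the complete bipartite example certifies that the value cannot be improved.
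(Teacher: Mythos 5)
Your proof is correct, and there is nothing in the paper to compare it against: the paper states Mantel's theorem as a cited classical result without proof, referring the reader to standard textbooks, and both of your arguments are exactly the standard textbook proofs. Your main route is sound in every detail: triangle-freeness makes $N(v)$ independent for a maximum-degree vertex $v$, so every edge meets $V\setminus N(v)$ and the degree sum over that set counts each edge at least once, giving $\lvert E\rvert\le(n-\Delta)\Delta\le n^2/4$, with integrality of $\lvert E\rvert$ supplying the floor and $K_{\lfloor n/2\rfloor,\lceil n/2\rceil}$ (with exactly $\lfloor n/2\rfloor\cdot\lceil n/2\rceil=\lfloor n^2/4\rfloor$ edges) certifying sharpness. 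Your alternative via summing $\deg(u)+\deg(v)\le n$ over edges and applying Cauchy--Schwarz to $\sum_x\deg(x)^2\ge 4\lvert E\rvert^2/n$ is equally valid; nothing further is needed.
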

For $\ell\geq 4$ we will use the follwing more recent result by Abajo and Di\'anez~\cite{AbajoDian2015}.
\begin{theorem}[Theorem 1 in \cite{AbajoDian2015}]\label{thm:extremal_function}
  Let $\ell\geq 4$ and $\ell+1\leq n\leq 2\ell$ be integers. Then
  \[ \ex\left(n; \{C_3, C_4, \dots, C_\ell\}\right)=
    \begin{cases}
      n & \text{if }\ell+1\leq n\leq\lfloor 3\ell/2\rfloor,\\
      n+1 & \text{if } \lfloor 3\ell/2\rfloor+1\leq n\leq 2\ell-1,\\
      n+2 & \text{if }n=2\ell.
    \end{cases}
  \]
\end{theorem}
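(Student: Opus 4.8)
The plan is to recast the statement as a bound on the \emph{cycle rank} $r(G)=|E|-|V|+c$, where $c$ is the number of connected components. For a graph on $n$ vertices with girth at least $\ell+1$ one has $|E|=n-c+r\le n-1+r$, so $\ex(n;\{C_3,\dots,C_\ell\})$ is governed by the largest cycle rank such a graph can have; concretely the theorem is equivalent to showing that this maximal cycle rank equals $1$ for $\ell+1\le n\le\lfloor 3\ell/2\rfloor$, equals $2$ for $\lfloor 3\ell/2\rfloor+1\le n\le 2\ell-1$, and equals $3$ for $n=2\ell$. I would establish the matching lower and upper bounds on $r$ separately.

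For the lower bounds I would exhibit extremal graphs. The cycle $C_n$ has $n$ edges, girth $n\ge\ell+1$, and cycle rank $1$, settling the first range. In the second range a \emph{theta graph}---two vertices joined by three internally disjoint paths with $a,b,c$ edges and $a+b+c=n+1$---has $n$ vertices, $n+1$ edges, and cycle rank $2$; its cycles have lengths $a+b$, $a+c$, $b+c$, and a partition with $\max(a,b,c)\le n-\ell$ exists exactly when $n\ge\lfloor 3\ell/2\rfloor+1$, making the girth at least $\ell+1$. For $n=2\ell$ I would subdivide $K_4$, assigning opposite edges equal lengths $\alpha,\beta,\gamma$ with $\alpha+\beta+\gamma=\ell+1$ and each at most $(\ell+1)/2$: each of the four triangles then has length exactly $\ell+1$ and each of the three four-cycles has length $2(\ell+1-\alpha)$, $2(\ell+1-\beta)$, $2(\ell+1-\gamma)\ge\ell+1$, so the girth is $\ell+1$, while the graph has $2(\ell+1)=n+2$ edges and cycle rank $3$.

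The upper bound is the crux, and the idea is that girth forces the cycle rank to be small. Passing to the $2$-core $G_0$ of $G$ (same cycle rank $r$, no more vertices) and suppressing its degree-$2$ vertices produces a connected multigraph $H$ of minimum degree at least $3$, cycle rank $r$, and---by the handshake inequality $2|E(H)|\ge 3|V(H)|$---at most $2(r-1)$ vertices; each edge $e$ of $H$ carries the length $p_e\ge 1$ of the path it replaces, and every cycle $C$ of $H$ satisfies $\sum_{e\in C}p_e\ge\ell+1$. Thus $(p_e)$ is feasible for the linear program $\min\sum_e p_e$ subject to these girth constraints and $p_e\ge 0$, whose optimum equals $(\ell+1)\,\nu^*(H)$ by duality, where $\nu^*(H)$ is the maximum \emph{fractional cycle packing} of $H$ (the largest total weight on cycles keeping every edge covered to total weight at most $1$). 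Since the $2$-core has $\sum_e p_e-r+1$ vertices,
\[
  |V(G)|\ \ge\ |V(G_0)|\ =\ \sum_e p_e-r+1\ \ge\ (\ell+1)\,\nu^*(H)-r+1 ,
\]
so everything reduces to lower-bounding $\nu^*(H)$ by a function of $r$.

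For $r=2$ the only topologies are the theta graph, with $\nu^*=3/2$, and the figure-eight, with $\nu^*=2$; for $r=3$ the $2$-edge-connected topologies $K_4$ and the four-path theta are planar, so weighting each face of a planar embedding by $1/2$ (every edge lies on two faces) gives $\nu^*\ge(r+1)/2=2$, while the remaining loop- or bridge-types only cost more vertices. Substituting $\nu^*\ge 3/2$ and $\nu^*\ge 2$ into the displayed bound yields $|V|\ge\lfloor 3\ell/2\rfloor+1$ whenever $r\ge 2$ and $|V|\ge 2\ell$ whenever $r\ge 3$, exactly the two thresholds; combined with the observation that a disconnected host can have at most one cyclic component (two disjoint cycles already require $2(\ell+1)>2\ell$ vertices), this gives the claimed upper bounds on $|E|$. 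The \textbf{main obstacle} is the case $r\ge 4$ at $n=2\ell$: here I only need $|V|>2\ell$, i.e.\ $\nu^*>2+(r-3)/(\ell+1)$, but the convenient planar estimate $\nu^*\ge(r+1)/2$ breaks down for non-planar reduced graphs---$K_{3,3}$ has $\nu^*=\tfrac94<\tfrac52$---so this step requires either evaluating $\nu^*$ on the finitely many minimum-degree-$3$ topologies of each cycle rank or a robust general lower bound on $\nu^*$ in terms of $r$. Fortunately the hypothesis $\ell\ge 4$ leaves a margin, since already $\nu^*\ge\tfrac94$ at $r=4$ forces $|V|\ge(\ell+1)\tfrac94-3>2\ell$; making this uniform in $r$ is the one genuinely delicate point of the argument.
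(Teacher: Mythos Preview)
The paper does not prove this theorem at all; it is quoted as Theorem~1 of Abajo and Di\'anez~\cite{AbajoDian2015} and used as a black box in the proof of Lemma~\ref{lem:k_upper_bound}. There is therefore no proof in the paper to compare your attempt against.

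On the merits of your sketch: the lower-bound constructions are correct. The cycle $C_n$, the balanced theta graph, and the symmetrically subdivided $K_4$ give exactly the claimed values, and your verification of the girth of the subdivided $K_4$ via the four triangles and three Hamiltonian cycles is clean. The upper-bound framework---pass to the $2$-core, suppress degree-$2$ vertices to obtain a multigraph $H$ of minimum degree at least $3$, and bound $|V(G_0)|\ge(\ell+1)\nu^*(H)-r+1$ by LP duality---is elegant and does the job for $r\le 3$. Two small remarks there: first, for $n\le 2\ell$ you can (and implicitly do) argue that $G_0$ is in fact $2$-connected, since a cut vertex or bridge would yield two cycles sharing at most one vertex and hence at least $2\ell+1$ vertices; this rules out loops and bridges in $H$ without extra bookkeeping. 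Second, your list of $r=3$ topologies is incomplete (for instance the $3$-vertex multigraph with edge multiplicities $2,2,1$, and the $4$-cycle with two opposite edges doubled, are also loopless, bridgeless, cubic, and of cycle rank $3$), but all such $H$ are planar, so your face-weighting still gives $\nu^*\ge 2$ and the conclusion survives.

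The genuine gap is the one you flag yourself. To obtain $|E|\le n+2$ at $n=2\ell$ you must exclude \emph{every} $r\ge 4$, and for that you need $\nu^*(H)>2+(r-3)/(\ell+1)$ uniformly over all $2$-connected multigraphs $H$ of minimum degree $\ge 3$ and cycle rank $r$. For $r=4$ this can be finished by hand: such an $H$ has at most six vertices, so it is either planar (where faces give $\nu^*\ge 5/2$) or equal to $K_{3,3}$ (where $\nu^*=9/4>2+1/(\ell+1)$ since $\ell\ge 4$). But for $r\ge 5$ the planar bound $\nu^*\ge(r+1)/2$ no longer applies to all cases, and merely knowing that $H$ contains a $K_{3,3}$ subdivision gives only $\nu^*(H)\ge 9/4$, which is \emph{not} enough (you need $>2+2/(\ell+1)=2.4$ already at $r=5$, $\ell=4$). ``Finitely many topologies for each $r$'' is true but does not yield a proof, since $r$ ranges over all integers $\ge 4$; you would need a uniform lower bound of the shape $\nu^*(H)\ge 2+c(r-3)$ with $c>1/(\ell+1)$, or an entirely different argument for large $r$. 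Until that is supplied, the third case of the theorem remains unproved in your outline.
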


\section{Proof of Theorem~\ref{thm:result}}\label{sec:proof}
In order to prove Theorem~\ref{thm:result}, suppose $G=(V,E)$ is a graph with girth $g\geq 5$,
minimum degree $\delta\geq 2$, and that $S\subseteq V$ is a zero forcing set with
$\lvert S\rvert\leq(\delta-2)(g-3)+\delta-1$. Let $x_1,\dots,x_t$ be a chronological list of forcing
vertices resulting in all of $V$ becoming black starting with $S$ as the initial set of black
vertices, and let $y_i$ be the vertex that is forced by $x_i$. Let $C$ be a cycle of length $g$ in
$G$. Every vertex in $C$ has at least $\delta-2$ neighbors outside $C$, and from $g\geq 5$ it
follows that no two vertices $u$ and $v$ in $C$ have a common neighbor outside $C$, because
otherwise the shorter path between $u$ and $v$ on $C$, together with the edges joining $u$ and $v$ to their common
neighbor outside $C$ give a cycle of length at most $\lfloor g/2\rfloor+2<g$. As a consequence,
$\lvert V\rvert\geq g+g(\delta -2)=g(\delta-1)$, and therefore
\[t=\lvert V\setminus S\rvert = \lvert V\rvert - \lvert S\rvert \ge g(\delta - 1) - (\delta -2)(g -3) - \delta + 1 = g + 2\delta-5 \ge g - 1.\]
In particular $t\geq g-2$, and this allows us to define the set
$X=\{x_1,x_2,\dotsc,x_{g-2}\}$. Modifying the notation from~\cite{Davila2017}, we define
the sets $S_1=S\cap N(x_1)$ and
\[S_i=(S\cap N(x_i))\setminus\bigcup_{j=1}^{i-1}N(x_j)=(S\cap
  N(x_i))\setminus\bigcup_{j=1}^{i-1}S_j\] for $i=2,3,\dotsc,g-2$. Equivalently, $S_i$ is the set
of initially black neighbors of $x_i$ which are not adjacent to any $x_j$ for $j<i$. In particular,
the sets $S_i$ are pairwise disjoint. We also define the sets
\begin{align*}
  S_X^* &= \bigcup_{i=1}^{g-2}S_i=S\cap\bigcup_{i=1}^{g-2}N(x_i), & S_X &= (S\cap X)\setminus S_X^*. 
\end{align*}
In words, $S_X^*$ is the set of vertices $v\in S$ which are adjacent to at least one of the vertices
in $X$, and $S_X$ is the set of vertices in $X$ that are initially black and not adjacent to
any vertex in $X$. In particular, the sets $S^*_X$ and $S_X$ are disjoint subsets of $S$,
hence $\lvert S\rvert\geq\lvert S^*_X\rvert+\lvert S_X\rvert$.

We define two auxiliary graphs $H_1=(X,E_1)$ and $H_2=(X,E_2)$, both on the vertex set $X$. The
graph $H_1$ is the subgraph of $G$ induced by $X$, also denoted by $G[X]$, and two vertices
$x_j,x_i\in X$ with $j<i$ are adjacent in $H_2$ if and only if (1) they have a common neighbor $u$
in $G$, and (2) this common neighbor is not adjacent (in $G$) to any $x_k$ with $k<j$. More
formally, the edge sets of $H_1$ and $H_2$ are given by
\begin{align*}
  E_1 &= \left\{\,\{x_j,x_i\}\,:\,1\leq j<i\leq g-2,\,\{x_j,x_i\}\in E\right\},\\
  E_2 &= \left\{\,\{x_j,x_i\}\,:\,1\leq j<i\leq g-2,\,N(x_i)\cap(S_j\cup\{y_j\})\neq\emptyset\right\}.
\end{align*}
The graph $H_1$ is a forest, because it is a subgraph with less than $g$ vertices of the graph $G$
which has girth $g$. We remark that $S_X$ is precisely the set of isolated vertices in the graph
$H_1$: If $x_i$ is not isolated in $H_1$ then it is adjacent to some $x_j$, and therefore in
$S^*_X$, and if $x_i$ is isolated in $H_1$ then it cannot be forced by any of the vertices $x_j$
with $j<i$, and therefore it must have been black in the beginning which gives $x_i\in S_X$. The
assumption $g\geq 5$ implies that two vertices which are adjacent in $G$ do not have a common
neighbor in $G$, and two vertices which are non-adjacent in $G$ have at most one common neighbor in
$G$. In particular, $E_1\cap E_2=\emptyset$ and $\lvert N(x_j)\cap N(x_i)\rvert=1$ for every
$\{x_j,x_i\}\in E_2$.
\begin{lemma}\label{lem:bound_Si}
  For every $i\in[g-2]$,
  $\left\lvert S_i\right\rvert=\deg_G(x_i)-1-\left\lvert\left\{\,j\in[i-1]\,:\,\{x_j,x_i\}\in
      E_2\right\}\right\rvert$.
\end{lemma}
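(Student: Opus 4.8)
The plan is to count the $\deg_G(x_i)-1$ neighbors of $x_i$ other than the forced vertex $y_i$ and sort them into three classes. First I would establish the basic ``black at forcing time'' characterization: at the moment $x_i$ forces $y_i$, every neighbor of $x_i$ except $y_i$ is already black, and a vertex is black at that time precisely when it belongs to $S$ or equals some $y_k$ with $k<i$. This yields the partition
\[
  N(x_i)\setminus\{y_i\}=S_i\ \cup\ A'\ \cup\ B,
\]
where $A'=(N(x_i)\cap S)\cap\bigcup_{j<i}N(x_j)$ is the set of initially black common neighbors of $x_i$ and some earlier $x_j$, and $B=(N(x_i)\setminus\{y_i\})\cap\{y_1,\dots,y_{i-1}\}$ is the set of neighbors of $x_i$ forced before step $i$. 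Since $S$ is disjoint from the set of forced vertices, these three sets are pairwise disjoint, so $\lvert S_i\rvert=\deg_G(x_i)-1-\lvert A'\rvert-\lvert B\rvert$.

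It then remains to show that $\lvert A'\rvert+\lvert B\rvert$ equals the number of indices $j<i$ with $\{x_j,x_i\}\in E_2$. I would do this by exhibiting an explicit bijection $\Phi$ from $A'\cup B$ onto $\{j\in[i-1]:\{x_j,x_i\}\in E_2\}$. For $u\in A'$ set $\Phi(u)$ to be the smallest index $j$ with $u\in N(x_j)$; then $u\in S_j$, so $N(x_i)\cap S_j\ne\emptyset$ and hence $\{x_j,x_i\}\in E_2$. For $y_k\in B$ set $\Phi(y_k)=k$; then $y_k\in N(x_i)$ gives $\{x_k,x_i\}\in E_2$. Conversely, every $E_2$-edge $\{x_j,x_i\}$ arises this way: either some vertex of $N(x_i)\cap S_j$ lies in $A'$ and maps to $j$, or $y_j\in N(x_i)$ lies in $B$ and maps to $j$, which establishes surjectivity.

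The crux of the argument --- and where the girth hypothesis does the real work --- is the injectivity of $\Phi$. Here I would invoke the two consequences of $g\ge 5$ recorded just before the lemma, namely that any two vertices of $G$ have at most one common neighbor. Thus two distinct elements of $A'$ cannot map to the same $j$, since they would be two common neighbors of $x_i$ and $x_j$; and an element $u\in A'$ and a vertex $y_k\in B$ cannot map to the same index, since $u$ and $y_k$ would then both be common neighbors of $x_i$ and $x_k$, forcing $u=y_k$, which is impossible as $u\in S$ while $y_k\notin S$. Injectivity within $B$ is immediate because the forced vertices are distinct. With $\Phi$ a bijection between disjoint sets we obtain $\lvert A'\rvert+\lvert B\rvert=\lvert\{j\in[i-1]:\{x_j,x_i\}\in E_2\}\rvert$, which combined with the displayed count proves the lemma. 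I expect the only delicate points to be the careful definition of $\Phi$ on $A'$ (choosing the minimal index so that the common neighbor genuinely lands in the corresponding $S_j$) and the verification that the three-way partition is exact; both are routine once the ``black at forcing time'' characterization is in place.
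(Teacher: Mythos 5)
Your proof is correct and takes essentially the same route as the paper: your $A'\cup B$ is exactly the paper's $\bigcup_{j=1}^{i-1}\bigl(N(x_i)\cap(S_j\cup\{y_j\})\bigr)$, your map $\Phi$ is the paper's bijection onto $\{\,j\in[i-1]:\{x_j,x_i\}\in E_2\,\}$, and the girth-based fact that $x_i$ and $x_j$ have at most one common neighbor does the same work for injectivity in both arguments. The only difference is presentational --- you split the black neighbors into the $S$-part and the forced-part and verify injectivity case by case, where the paper indexes directly by $j$ via the pairwise disjointness of the sets $S_j\cup\{y_j\}$.
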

\begin{proof}
  The vertex $x_1$ forces in the first step. So all but one of its neighbors are initially black,
  and this implies $\lvert S_1\rvert=\deg_G(x_1)-1$. For $i\geq 2$, since $x_i$ forces in step $i$,
  all but one of its neighbours are black after the first $i-1$ forcing steps:
  $\left\lvert N(x_i)\cap\left( S\cup\{y_1,\dots,y_{i-1}\}\right)\right\rvert=\deg_G(x_i)-1$.  The
  sets $S_j\cup\{y_j\}$ for $j\in[i-1]$ are pairwise disjoint, and we obtain a partition
\[N(x_i)\cap\left(
    S\cup\{y_1,\dots,y_{i-1}\}\right)=S_i\cup\bigcup_{j=1}^{i-1}\left(\,N(x_i)\cap\left(S_j\cup\{y_j\}\right)\,\right).\]
In particular, for every vertex
$v\in\bigcup_{j=1}^{i-1}\left(\,N(x_i)\cap\left(S_j\cup\{y_j\}\right)\,\right)$, there is a unique
index $j\in[i-1]$ with $v\in S_j\cup\{y_j\}$, and then $\{x_j,x_i\}\in E_2$ by definition of
$E_2$. Conversely, for every $j\in[i-1]$ with $\{x_j,x_i\}\in E_2$, there is a corresponding vertex
$v\in N(x_i)\cap\left(S_j\cup\{y_j\}\right)$, and this establishes a bijection between the sets
$\bigcup_{j=1}^{i-1}\left(\,N(x_i)\cap\left(S_j\cup\{y_j\}\right)\,\right)$ and
$\left\{\,j\in[i-1]\,:\,\{x_j,x_i\}\in E_2\right\}$. Consequently,
\begin{multline*}
  \lvert S_i\rvert = \left\lvert N(x_i)\cap\left( S\cup\{y_1,\dots,y_{i-1}\}\right)\right\rvert -
  \left\lvert\bigcup_{j=1}^{i-1}\left(\,N(x_i)\cap\left(S_j\cup\{y_j\}\right)\,\right)\right\rvert\\
  =\deg_G(x_i)-1-\left\lvert\left\{\,j\in[i-1]\,:\,\{x_j,x_i\}\in E_2\right\}\right\rvert.\qedhere
\end{multline*}
\end{proof}
\begin{lemma}\label{lem:bound_S*}
  $\displaystyle \lvert S^*_X\rvert \geq (g-2)(\delta-1)-\lvert E_2\rvert$.
\end{lemma}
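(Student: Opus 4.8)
The plan is to obtain the bound by summing the exact formula from Lemma~\ref{lem:bound_Si} over all $i\in[g-2]$. Since the sets $S_1,\dots,S_{g-2}$ are pairwise disjoint with union $S^*_X$, we have $\lvert S^*_X\rvert=\sum_{i=1}^{g-2}\lvert S_i\rvert$. Substituting the expression from Lemma~\ref{lem:bound_Si} yields
\[\lvert S^*_X\rvert=\sum_{i=1}^{g-2}\deg_G(x_i)-(g-2)-\sum_{i=1}^{g-2}\left\lvert\left\{\,j\in[i-1]\,:\,\{x_j,x_i\}\in E_2\right\}\right\rvert,\]
so it remains to bound the first sum from below and to evaluate the last sum exactly.

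First I would use the minimum degree hypothesis: since $\deg_G(x_i)\geq\delta$ for every $i$, the degree sum satisfies $\sum_{i=1}^{g-2}\deg_G(x_i)\geq(g-2)\delta$. The key step is then to recognise that the double sum counts each edge of $E_2$ exactly once. Indeed, every edge of $E_2$ has the form $\{x_j,x_i\}$ for a unique pair of indices $j<i$, and it is counted in the inner set precisely once, namely when the outer index equals its larger endpoint~$i$. Reading the inner count as the in-degree of $x_i$ when each edge of $E_2$ is oriented from its smaller to its larger index, summing these in-degrees over all vertices recovers the total number of edges without double counting, so $\sum_{i=1}^{g-2}\lvert\{j\in[i-1]:\{x_j,x_i\}\in E_2\}\rvert=\lvert E_2\rvert$.

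Combining the two estimates gives $\lvert S^*_X\rvert\geq(g-2)\delta-(g-2)-\lvert E_2\rvert=(g-2)(\delta-1)-\lvert E_2\rvert$, which is the desired inequality. I do not expect any serious obstacle here, as the argument is a direct consequence of Lemma~\ref{lem:bound_Si} together with the bound $\deg_G(x_i)\geq\delta$; the only point deserving a moment's care is the double-counting claim that the iterated count of backward $E_2$-neighbours equals $\lvert E_2\rvert$, which I would justify by the orientation argument described above.
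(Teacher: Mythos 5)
Your proposal is correct and follows essentially the same route as the paper: summing the identity of Lemma~\ref{lem:bound_Si} over $i\in[g-2]$, using the disjointness of the $S_i$, the bound $\deg_G(x_i)\geq\delta$, and the observation that $\sum_{i=1}^{g-2}\lvert\{j\in[i-1]:\{x_j,x_i\}\in E_2\}\rvert=\lvert E_2\rvert$. Your explicit orientation argument for the last equality is a slightly more detailed justification of a step the paper treats as immediate, but the proof is the same.
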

\begin{proof}
  This is obtained by summing the equalities from Lemma~\ref{lem:bound_Si} over $i\in[g-2]$, taking into
  account that the sets $S_i$ are pairwise disjoint, and that $\deg_G(x_i)\geq\delta$ for all $i$:
  \begin{multline*}
    \left\lvert S^*_X\right\rvert=\left\lvert S_1\cup S_2\cup\dots\cup
      S_{g-2}\right\rvert=\sum_{i=1}^{g-2}\left\lvert S_i\right\rvert
    = \sum_{i=1}^{g-2}\left(\deg_G(x_i)-1-\left\lvert\left\{\,j\in[i-1]\,:\,\{x_j,x_i\}\in E_2\right\}\right\rvert\right)\\
    \geq (g-2)(\delta-1) - \sum_{i=1}^{g-2}\left\lvert\left\{\,j\in[i-1]\,:\,\{x_j,x_i\}\in
        E_2\right\}\right\rvert = (g-2)(\delta-1)-\lvert E_2\rvert.\qedhere
  \end{multline*}
\end{proof}
Combining Lemma~\ref{lem:bound_S*} with our assumption on $\lvert S\rvert$ we obtain
\[(\delta-2)(g-3)+\delta-1\geq\lvert S\rvert\geq\lvert S^*_X\rvert+\lvert
  S_X\rvert\geq(g-2)(\delta-1)-\lvert E_2\rvert + \lvert
  S_X\rvert,\]
and after rearranging,
\begin{equation}\label{eq:E2_lower_bound}
  \lvert E_2\rvert \geq (g-2)(\delta-1)+ \lvert
  S_X\rvert-(\delta-2)(g-3)-\delta+1=g-3+\lvert S_X\rvert.
\end{equation}
Let $X_1,\dots,X_k$ be the vertex sets of the connected components of $H_1$ such that
$\lvert X_1\rvert\geq\lvert X_2\rvert\geq\dots\geq\lvert X_k\rvert$, and put $l=k-\lvert
S_X\rvert$. Equivalently, $l=0$ if $E_1=\emptyset$ and $l=\max\{i\,:\,\lvert X_i\rvert\geq 2\}$ if
$E_1\neq\emptyset$.  Let $E_s(X_p)$ for $s\in\{1,2\}$ and $p\in[l]$ be the set of edges
$\{x_j,x_i\}\in E_s$ with both vertices in $X_p$, and let $E_2(X_p,X_q)$ for $1\leq p<q\leq k$ be
the set of edges $\{x_j,x_i\}\in E_2$ with one vertex in $X_p$ and the other in $X_q$. This provides
a partition $E_2=E'_2\cup E''_2$ where
\begin{align*}
  E'_2 &= \bigcup_{1\leq p\leq l} E_2(X_p), & E''_2 &= \bigcup_{1\leq p<q\leq k} E_2(X_p,X_q).
\end{align*}
\begin{lemma}\label{lem:star}
  Let $u\in V$, and supposed that $N=N_G(u)\cap X$ is non-empty. Let $j=\min\{i\,:\,x_i\in
  N\}$. Then the subgraph of $H_2$ induced by $N$ is a star with center $x_j$, that is,
\[\{\,\{v,w\}\in E_2\,:\,v,w\in N\,\}=\{\,\{x_j,v\}\,:\,v\in
    N\setminus\{x_j\}\,\}.\] 
\end{lemma}
\begin{proof}
  Vertex $x_j$ is forcing in step $j$, and its neighbor $u$ is not adjacent (in $G$) to any vertex
  $x_i$ with $i<j$. This implies that either $u$ was initially black, that is, $u\in S$, or $u$ is
  the vertex forced by $x_j$, that is, $u=y_j$. In any case $u\in S_j\cap\{y_j\}$. For every $v\in
    N\setminus\{x_j\}$, we have $v=x_i$ for some $i>j$, and then $u\in N(x_i)\cap(S_j\cap\{y_j\})$
    implies $\{x_j,x_i\}\in E_2$. Now fix two vertices $v,w\in N\setminus\{x_j\}$, say $v=x_i$ and
    $w=x_{i'}$ with $j<i<i'$. Since the unique common neighbor (in $G$) of $v$ and $w$ is in
    $S_j\cup\{y_j\}$ which is disjoint from $S_{i}\cup\{y_i\}$, we conclude $\{v,w\}\not\in E_2$.
\end{proof}
\begin{lemma}\label{lem:bound_E2_p}
  For every $p\in[l]$, $\lvert E_2(X_p)\rvert=\lvert X_p\rvert-2$.
\end{lemma}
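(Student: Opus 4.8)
The plan is to sort the edges of $E_2(X_p)$ by their common neighbour in $G$ and to prove that this common neighbour always lies in $X_p$ itself; once that is established, Lemma~\ref{lem:star} turns the count into a handshake sum over the tree $H_1[X_p]$. Recall that $g\geq 5$ guarantees that each edge of $E_2$ arises from a unique common neighbour in $G$.

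The heart of the argument, and the step I expect to be the main obstacle, is the claim that for $\{x_a,x_c\}\in E_2(X_p)$ the common neighbour $u$ satisfies $u\in X_p$. First I would suppose for contradiction that $u\notin X$. Because $x_a$ and $x_c$ lie in one component $X_p$ of the forest $H_1=G[X]$, they are joined by a path $P$ inside the tree $H_1[X_p]$, whose length is at most $\lvert X_p\rvert-1\leq\lvert X\rvert-1=g-3$. Adding the two edges $\{x_a,u\}$ and $\{u,x_c\}$ to $P$ yields a genuine cycle of $G$ — genuine because $u\notin X$ is distinct from every vertex of $P$ — of length at most $(g-3)+2=g-1<g$, contradicting the girth. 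Hence $u\in X$, and since $u$ is adjacent to $x_a\in X_p$ the edge $\{u,x_a\}$ lies in $E_1$, forcing $u$ into the component $X_p$; write $u=x_b$. The obstacle is thus resolved by a single metric observation: $X$ has only $g-2$ vertices, so no path inside a component of $H_1$ is long enough to close up with an outside vertex into a cycle of length at least $g$.

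With every common neighbour located inside $X_p$, the counting is routine. For $x_b\in X_p$, every $X$-neighbour of $x_b$ lies in $X_p$, so $N_G(x_b)\cap X$ is exactly the set $N_{H_1[X_p]}(x_b)$ of tree-neighbours of $x_b$. I would then invoke Lemma~\ref{lem:star} with $u=x_b$: the $E_2$-edges spanned by $N_G(x_b)\cap X$ form a star, hence number $\deg_{H_1[X_p]}(x_b)-1$, and these are precisely the edges of $E_2(X_p)$ whose common neighbour is $x_b$. Since distinct common neighbours give disjoint edge sets and every edge of $E_2(X_p)$ is covered, summing over the $\lvert X_p\rvert$ vertices of the tree and applying $\sum_{x_b}\deg_{H_1[X_p]}(x_b)=2(\lvert X_p\rvert-1)$ gives
\[
  \lvert E_2(X_p)\rvert=\sum_{x_b\in X_p}\bigl(\deg_{H_1[X_p]}(x_b)-1\bigr)=2(\lvert X_p\rvert-1)-\lvert X_p\rvert=\lvert X_p\rvert-2.
\]
Finally I would note that $p\leq l$ forces $\lvert X_p\rvert\geq 2$, so $H_1[X_p]$ is a tree on at least two vertices, every degree is at least one, and no summand above is negative or vacuous.
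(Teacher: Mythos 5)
Your proof is correct and takes essentially the same approach as the paper: locate the unique common neighbour of each edge of $E_2(X_p)$ inside $X_p$ via a girth argument, then sum the star counts from Lemma~\ref{lem:star} over the tree $H_1[X_p]$ using the handshake identity $\sum_{u\in X_p}\deg_{H_1}(u)=2(\lvert X_p\rvert-1)$. The only cosmetic difference is your two-step case split (first ruling out $u\notin X$ by a cycle of length at most $g-1$, then forcing $u\in X$ into $X_p$ by component connectivity), where the paper handles $u\notin X_p$ uniformly with a single cycle argument; you also make explicit the minor point, left implicit in the paper, that $p\leq l$ guarantees $\lvert X_p\rvert\geq 2$ so Lemma~\ref{lem:star} applies to every $u\in X_p$.
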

\begin{proof}
  Let $\{v,w\}\in E_2(X_p)$, and let $u$ be the unique common neighbor (in $G$) of $v$ and $w$. Then
  $u\in X_p$, because otherwise the path between $v$ and $w$ in the tree $(X_p,E_1(X_p))$ together
  with the edges $\{u,v\}$ and $\{u,w\}$ forms a cycle in $G$ of length at most
  $\lvert X_p\rvert+1\leq g-1$. Fix some $u\in X_p$ and consider the set
  $N=N_G(u)\cap X\subseteq X_p$. Let $j$ be the smallest index with $x_j\in N$. By Lemma~\ref{lem:star},
\[\left\lvert\{\,\{v,w\}\in E_2\,:\,v,w\in N\,\}\right\rvert=\left\lvert\{\,\{x_j,v\}\,:\,v\in
    N\setminus\{x_j\}\,\}\right\rvert = \deg_{H_1}(u)-1.\]
Since for every $\{v,w\}\in E_2(X_p)$ there is a unique $u\in X_p$ with $v,w\in N_G(u)$, we conclude
that
\[\lvert E_2(X_p)\rvert = \sum_{u\in X_p}\left(\deg_{H_1}(u)-1\right)=2\lvert E_1(X_p)\rvert-\lvert
  X_p\rvert=2(\lvert X_p\rvert-1)-\lvert X_p\rvert=\lvert X_p\rvert-2.\qedhere\]
\end{proof}
As a consequence of Lemma~\ref{lem:bound_E2_p}, 
\[\lvert E'_2\rvert=\sum_{p=1}^l\left\lvert E_2(X_p)\right\rvert = \sum_{p=1}^l\left(\left\lvert X_p\right\rvert-2\right) =g-2-\lvert S_X\rvert-2l.\]
Combining this with~\eqref{eq:E2_lower_bound},
\begin{equation}\label{eq:E2''_lower_bound}
  \lvert E''_2\rvert = \lvert E_2\rvert-\lvert E'_2\rvert\geq \left(g-3+\lvert S_X\rvert\right)-\left(g-2-\lvert
    S_X\rvert-2l\right)=2\left(\lvert S_X\rvert+l\right)-1=2k-1.
\end{equation}
Next we consider the graph $H_3=(X,E_1\cup E''_2)$, that is, $H_3$ is obtained from $H_1$ by adding
the edges of $H_2$ which connect distinct components of $H_1$. The next lemma shows that short
cycles in $H_3$ can be lifted to short cycles in $G$.
\begin{lemma}\label{lem:inducing_cycles}
  Let $C$ be a cycle of length $\lambda$ in $H_3$ which contains $s$ edges from $E''_2$. Then there
  exists a cycle $C'$ of length at most $\lambda+s$ in $G$. 
\end{lemma}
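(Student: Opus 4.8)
The plan is to lift the cycle $C$ from $H_3$ to a closed walk in $G$ by replacing each abstract edge with the corresponding path in $G$, and then to extract a genuine cycle of $G$ from this walk. First I would fix the construction. Every edge of $C$ lies in $E_1\cup E''_2$. An edge in $E_1$ is a genuine edge of $G$, so I keep it unchanged. An edge $\{x_j,x_i\}\in E''_2\subseteq E_2$ has, since $g\ge 5$, a unique common neighbour $u$ in $G$; I replace this edge by the length-$2$ path $x_j-u-x_i$. Traversing $C$ and performing these replacements produces a closed walk $W$ in $G$. As $C$ has $\lambda-s$ edges in $E_1$ and $s$ edges in $E''_2$, the length of $W$ is $(\lambda-s)+2s=\lambda+s$.

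Next I would determine which vertices of $W$ can coincide. The vertices of $W$ are the $\lambda$ vertices of $C$, which all lie in $X$ and are pairwise distinct because $C$ is a cycle, together with one intermediate vertex $u$ per $E''_2$-edge. For an $E''_2$-edge the common neighbour $u$ lies outside $X$: if $u=x_m\in X$, then $x_m$ would be $G$-adjacent to both endpoints of the edge, placing them in the same component of $H_1$ and contradicting the fact that $E''_2$-edges join distinct components. Hence no intermediate vertex equals a vertex of $C$, so the only possible coincidences are between two intermediate vertices. Two $E''_2$-edges with the same common neighbour $u$ must, by Lemma~\ref{lem:star}, both be incident to the minimum-index neighbour of $u$ in $X$; as edges of $C$ that share a vertex they are consecutive on $C$. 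Therefore the only way $W$ fails to be a simple cycle is that two consecutive $E''_2$-edges, meeting at some $x_c\in V(C)$, share their common neighbour $u$, which in $W$ appears as the backtrack $x_a-u-x_c-u-x_b$.

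I would then remove each such backtrack via the shortcut $x_a-u-x_b$, which is a legitimate path of $G$ because $u$ is adjacent to both $x_a$ and $x_b$; each shortcut deletes the centre $x_c$ and the repeated copy of $u$, shortening the walk by $2$. Because the centre of $u$ is uniquely determined by $u$, no $E''_2$-edge can belong to two distinct backtracks, so the shortcuts act on disjoint pairs of $E''_2$-edges at distinct centres and may be carried out simultaneously. By the coincidence analysis the resulting closed walk $W'$ then has no repeated vertices, hence is a genuine cycle $C'$ of $G$. Its length is $\lambda+s-2t$, where $t$ is the number of shortcuts; since each shortcut consumes two of the $s$ edges in $E''_2$ we have $2t\le s$, so $C'$ has length between $\lambda$ and $\lambda+s$. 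In particular $C'$ has length at most $\lambda+s$, as required, and length at least $\lambda\ge 3$, confirming that it is a genuine cycle.

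The step I expect to be the main obstacle is exactly the verification in the second paragraph that $W$ degenerates in only one controlled way. The substantive content is the claim that two $E''_2$-edges can share a common neighbour only when they are consecutive on $C$ and meet at the star-centre supplied by Lemma~\ref{lem:star}; once this is in hand, the length bookkeeping and the shortcut are routine. An alternative would be to invoke the general fact that a closed walk containing a cycle contains one of no greater length, but I prefer the explicit shortcutting because it simultaneously certifies that $C'$ has length at least $\lambda$ and so does not collapse below $3$.
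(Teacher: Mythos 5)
Your proof is correct and follows essentially the same route as the paper's: the paper likewise replaces each $E''_2$-edge of $C$ by the $2$-path through its unique common neighbour $u$, using Lemma~\ref{lem:star} to see that $E_2(u)\cap C$ is empty, a single edge, or two consecutive edges, and in the last case merges the two $2$-paths exactly as your backtrack shortcut does. Your explicit verifications (that the intermediate vertices lie outside $X$, that coinciding intermediate vertices force consecutive edges at the star centre, and that the final walk is a genuine cycle of length at least $\lambda$) are left implicit in the paper, but the underlying construction is identical.
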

\begin{proof}
  Fix any $u\in V$ with $N=N_G(u)\cap X\neq\emptyset$, and let $j$ be the smallest index with
  $x_j\in N$. By Lemma~\ref{lem:star},
  \[\{\,\{v,w\}\in E_2\,:\,v,w\in N\,\} = \{\,\{x_j,v\}\,:\,v\in N\setminus\{x_j\}\,\}.\]
  Let's call this set $E_2(u)$. It follows that for every vertex $u$, either $E_2(u)\cap
  C=\emptyset$, or $\lvert E_2(u)\cap C\rvert=1$, or $E_2(u)\cap C$ consists of two adjacent
  edges $\{v,v'\}$ and $\{v',v''\}$. We obtain the required cycle $C'$ by starting with $C$ and
  doing the following replacements:
  \begin{itemize}
  \item For every $u$ with $\lvert E_2(u)\cap C\rvert=1$, say $E_2(u)\cap C=\{\,\{v,w\}\,\}$, replace
    $\{v,w\}$ by $\{u,v\}$ and $\{u,w\}$.
  \item For every $u$ with $\lvert E_2(u)\cap C\rvert=2$, say $E_2(u)\cap C=\{\,\{v,v'\},\,\{v',v''\}\,\}$, replace
    $\{v,v'\}$ and $\{v',v''\}$ by $\{u,v\}$ and $\{u,v''\}$.\qedhere
  \end{itemize}
\end{proof}
\begin{lemma}\label{lem:bound_E2_edges_in_cycles}
  Every cycle in the graph $H_3$ has at least $\lceil(k+2)/2\rceil$ edges in $E''_2$.
\end{lemma}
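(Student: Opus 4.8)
The plan is to pit a girth-based lower bound on the length of a cycle against an upper bound coming from the scarcity of vertices in $H_3$. So I would fix an arbitrary cycle $C$ in $H_3$, write $\lambda$ for its length, and let $s$ denote the number of its edges that lie in $E_2''$. The lower bound is immediate from Lemma~\ref{lem:inducing_cycles}: that lemma produces a cycle $C'$ in $G$ of length at most $\lambda+s$, and since $G$ has girth $g$ every cycle of $G$ has length at least $g$, so $\lambda+s\ge g$.

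For the matching upper bound I would examine how $C$ spreads over the components $X_1,\dots,X_k$ of $H_1$. Deleting from $C$ the $s$ edges that belong to $E_2''$ cuts the cycle into exactly $s$ arcs, and since every edge of $E_1$ stays inside a single component of $H_1$, each arc lies entirely within one $X_p$. Consequently the number $c$ of distinct components that $C$ meets is at most $s$. The remaining $k-c$ components are disjoint from $C$ and each contains at least one vertex, so, using $\lvert X\rvert=g-2$, the number of vertices of $C$ satisfies $\lambda\le(g-2)-(k-c)$.

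Putting the two bounds together, I would conclude $s\ge g-\lambda\ge 2+(k-c)\ge 2+(k-s)$, where the last inequality is just $c\le s$; this rearranges to $2s\ge k+2$ and hence $s\ge\lceil(k+2)/2\rceil$, which is the claim.

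I expect the only real content to be the upper bound on $\lambda$: the point is that the crossing edges are so few that $C$ can reach at most $s$ of the $k$ components, and each missed component then costs a vertex and shrinks the room available to $C$. The girth step and the final arithmetic are routine once Lemma~\ref{lem:inducing_cycles} is in hand; the place where care is needed is confirming that the $s$ deleted edges split $C$ into exactly $s$ single-component arcs, so that $c\le s$, which is precisely where the forest structure of $H_1$ and the fact that $E_2''$ joins distinct components enter.
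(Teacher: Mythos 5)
Your proof is correct and follows essentially the same route as the paper: both pit the girth bound $\lambda+s\ge g$ obtained from Lemma~\ref{lem:inducing_cycles} against the upper bound $\lambda\le s+(g-2-k)$, and the same arithmetic gives $2s\ge k+2$. The only difference is cosmetic: the paper gets the upper bound directly by counting edges (the cycle has $\lambda-s$ edges in $E_1$ and $\lvert E_1\rvert=g-2-k$ since $H_1$ is a forest with $g-2$ vertices and $k$ components), whereas you reach the same inequality through a vertex count via the components met by the cycle's arcs.
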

\begin{proof}
  Since $H_1$ is a forest with $g-2$ vertices and $k$ connected components, we have $\lvert E_1\rvert= g-2-k$. Let $C$
  be a cycle in $H_3$ which has $s$ edges in $E''_2$, and let $\lambda$ be the length of $C$. Then
  $\lambda\leq s+\lvert E_1\rvert=s+(g-2-k)$, and by Lemma~\ref{lem:inducing_cycles}, $G$ contains a
  cycle of length at most $2s+(g-2-k)$. Since $G$ has girth $g$, this implies $2s+(g-2-k)\geq g$, hence $2s\geq k+2$.
\end{proof}
\begin{lemma}\label{lem:bound_E2_pq_strong}
 For every pair $(p,q)$ with $1\leq p< q\leq k$, $\lvert E''_2(X_p,X_q)\rvert\leq 1$. 
\end{lemma}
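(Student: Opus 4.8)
The plan is to argue by contradiction: assume that $E_2(X_p,X_q)\subseteq E''_2$ contains two distinct cross edges $e_1=\{v_1,w_1\}$ and $e_2=\{v_2,w_2\}$ with $v_1,v_2\in X_p$ and $w_1,w_2\in X_q$, and to produce from them a cycle in $G$ of length less than $g$. First I would record the relevant facts about the common neighbours. Since $v_1w_1\notin E$ and $g\geq 5$, the vertices $v_1,w_1$ have a unique common neighbour $u_1$ in $G$, and $u_1\notin X$, since otherwise $u_1$ would be adjacent in $G$ to vertices of two different components of $H_1$; likewise $u_2\notin X$. Moreover, distinct cross edges must have distinct common neighbours: if $e_1$ and $e_2$ shared a common neighbour $u$, then applying Lemma~\ref{lem:star} to $N_G(u)\cap X$ shows that both edges are incident to the centre of the resulting star, hence share an endpoint, which forces their two remaining endpoints to be distinct vertices of a single component, say $X_q$, both adjacent to $u$; the path between them in the tree $(X_q,E_1(X_q))$ together with $u$ then closes a cycle of length at most $\lvert X_q\rvert+1\leq g-1$, contradicting the girth. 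In particular $u_1\neq u_2$.

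\textbf{The fundamental cycle.} Let $P_p$ be the path in $(X_p,E_1(X_p))$ joining $v_1$ and $v_2$, and $P_q$ the path in $(X_q,E_1(X_q))$ joining $w_1$ and $w_2$, with $\ell_p$ and $\ell_q$ their respective numbers of edges. Concatenating $v_1,u_1,w_1$, then $P_q$, then $w_2,u_2,v_2$, then $P_p$ yields a cycle $C'$ in $G$, which is simple because $u_1,u_2\notin X$ and $u_1\neq u_2$. Its length is $\ell_p+\ell_q+4\leq\lvert X_p\rvert+\lvert X_q\rvert+2$. As the components of $H_1$ are disjoint subsets of the $(g-2)$-element set $X$, the remaining $k-2$ components contribute at least $k-2$ vertices, so $\lvert X_p\rvert+\lvert X_q\rvert\leq (g-2)-(k-2)=g-k$ and $C'$ has length at most $g-k+2$. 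For $k\geq 3$ this is at most $g-1<g$, contradicting the girth; so the lemma holds whenever $k\geq 3$.

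\textbf{The case $k=2$ (main obstacle).} The delicate case is $k=2$, where the estimate only gives length at most $g$, and equality with the girth is a priori possible; this is the main obstacle. To resolve it I would use the lower bound~\eqref{eq:E2''_lower_bound}: when $k=2$ every edge of $E''_2$ joins the two components, so $\lvert E_2(X_p,X_q)\rvert=\lvert E''_2\rvert\geq 2k-1=3$, yielding a third cross edge $e_3=\{v_3,w_3\}$ distinct from $e_1$ and $e_2$. If $C'$ already has length less than $g$ we are done; otherwise its length equals $g$, which forces $\lvert X_p\rvert+\lvert X_q\rvert=g-2$, $\ell_p=\lvert X_p\rvert-1$, and $\ell_q=\lvert X_q\rvert-1$, i.e.\ both trees are paths with end vertices $v_1,v_2$ and $w_1,w_2$. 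Writing these paths as $p_0p_1\cdots p_m$ and $q_0\cdots q_{m'}$ with $p_0=v_1,p_m=v_2,q_0=w_1,q_{m'}=w_2$ and $m+m'=g-4$, the third edge attaches at $v_3=p_r$, $w_3=q_{r'}$. The cycle built from $e_1$ and $e_3$ (again simple, since $u_3\notin X$ and $u_3\neq u_1$) has length $r+r'+4$, and as $e_3\neq e_2=\{p_m,q_{m'}\}$ we have $(r,r')\neq(m,m')$ and hence $r+r'<m+m'=g-4$, producing a cycle of length at most $g-1$ in $G$ and the desired contradiction. The degenerate configurations---two cross edges sharing an endpoint, or a singleton component among $X_p,X_q$---are handled by the same concatenation with the appropriate path of length $0$, so the argument goes through uniformly and establishes $\lvert E''_2(X_p,X_q)\rvert\leq 1$.
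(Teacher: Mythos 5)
Your proof is correct, but it reaches the conclusion by a more self-contained route than the paper. The paper disposes of this lemma in three lines using its auxiliary machinery: two cross edges between $X_p$ and $X_q$ close a cycle in $H_3$ with $s=2$ edges from $E''_2$, so Lemma~\ref{lem:bound_E2_edges_in_cycles} (which itself rests on the lifting Lemma~\ref{lem:inducing_cycles}) forces $2\geq (k+2)/2$, hence $k=2$; then \eqref{eq:E2''_lower_bound} gives $\lvert E''_2\rvert\geq 3$, which yields a cycle in $H_3$ with two $E''_2$-edges avoiding at least one $E_1$-edge, of length at most $g-3$, lifting to a cycle of length at most $g-1$ in $G$. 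You never pass through $H_3$ or those two lemmas; you construct the short cycles in $G$ directly via the unique common neighbours, and your count $\lvert X_p\rvert+\lvert X_q\rvert\leq g-k$ (giving a cycle of length at most $g-k+2$, hence $k\leq 2$) is exactly the arithmetic of Lemma~\ref{lem:bound_E2_edges_in_cycles} specialised to $s=2$, since $\lvert E_1\rvert=g-2-k$. What your version buys is explicitness at two points the paper leaves implicit: the simplicity of the lifted cycles (each cross edge has its unique common neighbour outside $X$, and distinct cross edges have distinct common neighbours, which you correctly extract from Lemma~\ref{lem:star} together with a girth argument), and a justification of the paper's terse claim that for $k=2$ one can choose a cycle ``not using all the edges in $E_1$'': your equality analysis shows both trees must be spanning paths with the attachment vertices at their ends, and since $r\leq m$ and $r'\leq m'$ componentwise, $(r,r')\neq(m,m')$ indeed forces $r+r'<m+m'$, so the third edge produces a cycle of length at most $g-1$. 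You also handle the degenerate configurations (shared endpoints, singleton components) correctly. The cost is redundancy within the paper: Lemmas~\ref{lem:inducing_cycles} and~\ref{lem:bound_E2_edges_in_cycles} are needed again for Lemma~\ref{lem:k_upper_bound} and the final proof, so the paper's factored presentation amortises work that your inlined reconstruction repeats; as a stand-alone argument, however, yours is complete and arguably more transparent about why the constructed cycles are genuine short cycles in $G$.
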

\begin{proof}
  Suppose $\lvert E''_2(X_p,X_q)\rvert\geq 2$. Then $H_3$ contains a cycle with $2$ edges from
  $E''_2$, and $2\geq(k+2)/2$ by Lemma~\ref{lem:bound_E2_edges_in_cycles}, which implies $k\leq 2$, hence
  $k=2$. Then~(\ref{eq:E2''_lower_bound}) implies $\lvert E''(X_1,X_2)\rvert=\lvert E''\rvert\geq
  3$, and consequently, there is a cycle $C$ in $H_3$ which has $s=2$ edges in $E''_2$ and does not
  use all the edges in $\lvert E_1\rvert$, so its length is 
\[\lambda\leq 2+\lvert E_1\rvert-1=\lvert E_1\rvert+1=(g-2-2)+1=g-3.\] 
Then Lemma~\ref{lem:inducing_cycles} implies that $G$ contains a cycle of length at most
$\lambda+2\leq g-1$ edges, which contradicts the assumption on the girth of $G$. 
\end{proof}
\begin{lemma}\label{lem:k_upper_bound}
  $k\in\{5,6\}$.
\end{lemma}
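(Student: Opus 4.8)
The plan is to pass to the \emph{quotient graph} $\bar H$ obtained from $H_3$ by contracting each connected component $X_1,\dots,X_k$ of $H_1$ to a single vertex. Since the edges of $H_1$ lie inside the components, the edge set of $\bar H$ is exactly the image of $E''_2$: two component-vertices are adjacent in $\bar H$ precisely when there is an $E''_2$-edge joining the corresponding components. By Lemma~\ref{lem:bound_E2_pq_strong} there is at most one such edge for each pair, so $\bar H$ is a \emph{simple} graph on $k$ vertices, and by~\eqref{eq:E2''_lower_bound} it has $\lvert E''_2\rvert\geq 2k-1$ edges. Moreover, any cycle of length $s$ in $\bar H$ lifts to a cycle of $H_3$ with exactly $s$ edges from $E''_2$: one replaces each edge of $\bar H$ by the corresponding $E''_2$-edge and joins consecutive endpoints inside each visited component $X_p$ along the unique path in the tree $(X_p,E_1(X_p))$. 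Hence, by Lemma~\ref{lem:bound_E2_edges_in_cycles}, every cycle of $\bar H$ has length at least $\lceil(k+2)/2\rceil$; that is, $\bar H$ has girth at least $\lceil(k+2)/2\rceil$.

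With these three properties of $\bar H$ at hand, the lemma separates into two regimes. For the lower bound, a simple graph on $k$ vertices has at most $\binom{k}{2}$ edges, so $2k-1\leq\binom{k}{2}$, i.e.\ $k^2-5k+2\geq 0$; since $k=4$ violates this ($7>6$), we obtain $k\geq 5$.

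For the upper bound I would argue by contradiction, assuming $k\geq 7$. Then $\gamma:=\lceil(k+2)/2\rceil\geq 5$, so $\bar H$ contains none of $C_3,\dots,C_{\gamma-1}$ with $\ell:=\gamma-1\geq 4$, and Theorem~\ref{thm:extremal_function} applies with $n=k$. The range condition $\ell+1\leq k\leq 2\ell$ is verified directly: for even $k=2m$ one has $\ell=m$ and $k=2\ell$, while for odd $k=2m+1$ one has $\ell=m+1$ and $k=2\ell-1$ with $k>\lfloor 3\ell/2\rfloor$ (using $\ell\geq 4$). In either case Theorem~\ref{thm:extremal_function} gives $\ex(k;\{C_3,\dots,C_\ell\})\leq k+2$, which contradicts $\lvert E''_2\rvert\geq 2k-1>k+2$. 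Hence $k\leq 6$, and together with $k\geq 5$ this yields $k\in\{5,6\}$.

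The lower bound and the concluding inequality $2k-1>k+2$ are routine. The key idea, on which everything rests, is the passage to the quotient graph $\bar H$ together with the girth bound $\lceil(k+2)/2\rceil$ obtained by lifting cycles and invoking Lemma~\ref{lem:bound_E2_edges_in_cycles}; once $\bar H$ is recognised as a simple, edge-dense, large-girth graph, the rest is extremal graph theory. The step I expect to demand the most care is the upper bound: confirming that $(n,\ell)=\bigl(k,\lceil(k+2)/2\rceil-1\bigr)$ lies in the window $\ell+1\leq n\leq 2\ell$ for every $k\geq 7$ and tracking the parity of $k$ so that the extremal value never exceeds $k+2$. Finally, for $k\in\{5,6\}$ one has $\ell=3$, so Theorem~\ref{thm:extremal_function} is inapplicable, which is precisely why these two values are not excluded at this stage and must be handled separately afterwards.
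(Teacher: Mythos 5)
Your proposal is correct and follows essentially the same route as the paper: your quotient graph $\bar H$ is exactly the paper's auxiliary graph $H_4=([k],E_4)$, your girth bound via lifting cycles is the paper's use of Lemmas~\ref{lem:inducing_cycles}--\ref{lem:bound_E2_pq_strong}, and your choice $\ell=\lceil(k+2)/2\rceil-1=\lceil k/2\rceil$ coincides with the paper's $\ell=\lfloor(k+1)/2\rfloor$ in Theorem~\ref{thm:extremal_function}. The only difference is cosmetic: you verify the window $\ell+1\leq k\leq 2\ell$ and the parity cases explicitly, which the paper leaves implicit.
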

\begin{proof}
  For this proof, we introduce another graph $H_4=([k],E_4)$ with $\{i,j\}\in E_4$ if and only if
  $E''_2$ contains an edge between $X_i$ and $X_j$. By Lemma~\ref{lem:bound_E2_pq_strong} there is a
  one-to-one correspondence between $E''_2$ and $E_4$. It follows that
  $\binom{k}{2}\geq\lvert E_4\rvert=\lvert E''_2\rvert\geq 2k-1$, hence $k\geq 5$.  Suppose
  $k\geq 7$. Using~(\ref{eq:E2''_lower_bound}) and
  Theorem~\ref{thm:extremal_function} for $\ell=\lfloor(k+1)/2\rfloor$, we have
  $\lvert E_4\rvert=\lvert E''_2\rvert\geq
  2k-1>k+2\geq\ex\left(k,\{C_3,\dots,C_{\lfloor(k+1)/2\rfloor}\}\right)$. As a consequence, $H_4$
  contains a cycle of length $s\leq \lfloor(k+1)/2\rfloor<\lceil(k+2)/2\rceil$, and this corresponds
  to a cycle in $H_3$ which contains at most $s$ edges from $E''_2$, which is impossible by
  Lemma~\ref{lem:bound_E2_edges_in_cycles}.
\end{proof}
\begin{proof}[Proof of Theorem~\ref{thm:result}]
  Let $H_4=([k],E_4)$ be the graph introduced in the proof of Lemma~\ref{lem:k_upper_bound}, and
  note that $k\in\{5,6\}$ by Lemma~\ref{lem:k_upper_bound}. For both possible values for $k$,
  $\lvert E_4\rvert=\lvert E''_2\rvert\geq 2k-1>k^2/4$, and by Theorem~\ref{thm:mantel} this implies that
  $H_4$ contains a triangle. As a consequence, $H_3$ contains a cycle with three edges from
  $E''_2$. The length of this cycle is at most $3+\lvert E_1\rvert=3+(g-2-k)$, and by
  Lemma~\ref{lem:inducing_cycles} $G$ contains a cycle of length at most $6+(g-2-k)\leq g-1$, which
  is the required contradiction.
\end{proof}

\section{Concluding remarks}\label{sec:conclusion}
Let $f(g,\delta)$ denote the minimum zero forcing number over all graphs of girth $g$ and minimum
degree $\delta$. Theorem~\ref{thm:result} provides a lower bound for $f$, and
from~\cite{DavilaKenter} we know that this bound is tight in the following cases:
\begin{itemize}\itemsep=.2cm
\item $f(g,2)=2$ for all $g\geq 3$ (the $g$-cycle),
\item $f(3,\delta)=\delta$ for all $\delta\geq 1$ (the complete graph $K_{\delta+1}$),
\item $f(4,\delta)=2\delta-2$ for all $\delta\geq 2$ (the complete bipartite graph
  $K_{\delta,\delta}$),
\item $f(4,3)=4$ (the $3$-cube),
\item $f(5,3)=5$ (the Petersen graph),
\item $f(6,3)=6$ (the Heawood graph).
\end{itemize}
Consequently, the smallest open cases are the following.
\begin{question}
We know $7\leq f(7,3)\leq 8$ and $8\leq f(8,3)\leq 10$. Can we close these gaps?
\end{question}
\begin{question}
We know $f(5,4)\geq 8$. What is the best upper bound we can come up with?  
\end{question}
In general the bound $f(g,\delta)\geq\delta+(g-3)(\delta-2)$ is not sharp. For instance, using
essentially the same argument as in the proof of Theorem~\ref{thm:result}, one can prove
$f(g,\delta)\geq\delta+(g-3)(\delta-2)+1$ for $g\geq 14$, $\delta\geq 3$, and more generally, for
large values of $\delta$ and $g$ the exponential lower bound established in~\cite{Kalinowski2017} is
stronger than the bound from the present note. This motivates the following questions.
\begin{question}
What are upper bounds for $f(g,\delta)$? 
\end{question}
\begin{question}
  What can be said about the asymptotic behavior of $f(g,\delta)$?
\end{question}
 
\bibliographystyle{amsplain}
\bibliography{zero_forcing}

\end{document}